\newtheorem{theorem}{Theorem}
\newtheorem{problem}[theorem]{Problem}
\newtheorem{proposition}[theorem]{Proposition}
\newtheorem{lemma}[theorem]{Lemma}
\newtheorem{corollary}[theorem]{Corollary}
\newtheorem{conjecture}[theorem]{Conjecture}
\newtheorem*{slll}{The Lov\'asz Local Lemma}
\newtheorem*{claim}{Claim}
\renewcommand{\Pr}{\,\mathbb{P}}
\newcommand{\eps}{\varepsilon}
\newcommand*{\myproofname}{Proof}
\newenvironment{claimproof}[1][\myproofname]{\begin{proof}[#1]}{\end{proof}}
\title{Independent transversals in bipartite correspondence-covers}
\author{
Stijn Cambie
\thanks{Department of Mathematics, Radboud University, Postbus 9010, 6500 GL Nijmegen, Netherlands. 
Email: \protect\href{mailto:stijn.cambie@hotmail.com}{\protect\nolinkurl{stijn.cambie@hotmail.com}}, \protect\href{mailto:ross.kang@gmail.com}{\protect\nolinkurl{ross.kang@gmail.com}}. Supported by a Vidi grant (639.032.614) of the Dutch Research Council (NWO).}
\and
Ross J. Kang
\footnotemark[2]
}
\begin{document}

\maketitle

\begin{abstract}
Suppose $G$ and $H$ are bipartite graphs and $L: V(G)\to 2^{V(H)}$ induces a partition of $V(H)$ such that
the subgraph of $H$ induced between $L(v)$ and $L(v')$ is a matching whenever $vv'\in E(G)$.
We show for each $\varepsilon>0$ that, if $H$ has maximum degree $D$ and $|L(v)| \ge (1+\varepsilon)D/\log D$ for all $v\in V(G)$, then $H$ admits an independent transversal with respect to $L$, provided $D$ is sufficiently large. This bound on the part sizes is asymptotically sharp up to a factor $2$.
We also show some asymmetric variants of this result.
\end{abstract}

\section{Introduction}\label{sec:intro}

This note focuses on the progression from list colourings towards independent transversals in vertex-partitioned graphs, specifically for bipartite graphs. This follows close on the heels of earlier work of the authors together with Alon~\cite{ACK20+}, but since the setup is considerably strengthened we provide these results separately both for clarity and for the benefit of the interested reader.

Allow us to deliberately present list colouring of graphs in an awkward way. Let $G$ be a simple undirected graph. From a {\em list-assignment} $L$ of $G$, i.e.~a mapping $L:V(G)\to 2^{{\mathbb Z}^+}$, we derive the {\em list-cover} $H_\ell(G,L)$ for $G$ via $L$ as follows.
For every $v\in V(G)$, we let $L_\ell(v)=\{(v,c)\}_{c\in L(v)}$ and define $V(H_\ell)=\bigcup_{v\in V(G)}L_\ell(v)$. We define $E(H_\ell)$ by including $(v,c)(v',c')\in E(H_\ell)$ if and only if $vv'\in E(G)$ and $c=c'\in L(v)\cap L(v')$.
Note $L_\ell$ induces a partition of the vertices of $H_\ell$. We seek an {\em independent transversal} of $H_\ell$ with respect to this partition, i.e.~a vertex subset with exactly one vertex chosen from each part that simultaneously forms an independent set. The independent transversals of $H_\ell$ with respect to $L_\ell$ are in one-to-one correspondence with the proper $L$-colourings of $G$, as originally introduced in~\cite{Viz76,ERT80}. We remark that finding independent transversals of a general graph $H$ with respect to some partition $L$ of its vertices is another classic combinatorial problem~\cite{BES75}. In both settings, we usually seek lower bound conditions on the size of the parts in terms of the maximum degree of $H_\ell$ or $H$ that suffice for the existence of an independent transversal.

In this note, we restrict our attention almost exclusively to the case of bipartite $G$ and $H$. For this we find it helpful to introduce some finer notation. Let $G$ and $H$ be bipartite graphs with bipartitions $(A_G,B_G)$ and $(A_H,B_H)$, respectively. 
We say that $H$ is a {\em bipartite cover} of $G$ with respect to a mapping $L: V(G) \to 2^{V(H)}$ if
\begin{itemize}
\item
$H$ is a cover of $G$ with respect to $L$, i.e.~$L$ induces a partition of $V(H)$ and the subgraph induced between $L(v)$ and $L(v')$ is empty whenever $vv'\notin E(G)$; and
\item
the partition induced by $L$ agrees with the bipartitions $(A_G,B_G)$ and $(A_H,B_H)$, i.e.~$L(A_G)$ induces a partition of $A_H$ and $L(B_G)$ induces a partition of $B_H$.
\end{itemize}
The general problem here is as follows.

\begin{problem}\label{prob:general}
Let $G$ and $H$ be bipartite graphs with bipartitions $(A_G,B_G)$, $(A_H,B_H)$, respectively, 
such that $H$ is a bipartite cover of $G$ with respect to some $L: A_G\to 2^{A_H}, B_G\to 2^{B_H}$.
What conditions on $G$, $H$, and integers $k_A$, $k_B$, $\Delta_A$, $\Delta_B$, $D_A$, $D_B$ (where $\Delta_A$, $\Delta_B$ are possibly $\infty$) suffice to ensure the following? If the maximum degrees in $A_G$, $B_G$, $A_H$, $B_H$ are $\Delta_A$, $\Delta_B$, $D_A$, $D_B$, respectively, and $|L(v)|\ge k_A$ for all $v\in A_G$ and $|L(w)|\ge k_B$ for all $w\in B_G$, then there is guaranteed to be an independent transversal of $H$ with respect to $L$.
\end{problem}

\noindent
Although our considerations are broader, the symmetric version of Problem~\ref{prob:general} with $\Delta_A=\Delta_B=\Delta$ (possibly infinite) and $D_A=D_B=D$ is perhaps most natural. We note in this case that without further conditions on $G$ and $H$, Problem~\ref{prob:general} is already close to settled. This is due to a seminal result of Haxell~\cite{Hax01}, which implies that $k_A = k_B=2 D$ suffices. Furthermore, this is not far from sharp by considering $H$ to be a complete bipartite graph with $D$ vertices in each part (and $G$ an independent edge).
We next consider what happens when we impose some mild structural constraints on $G$ and $H$.

For $H$ assumed to be a (bipartite) list-cover of $G$ with respect to $L$, we studied Problem~\ref{prob:general} in some depth in our previous work with Alon~\cite{ACK20+}. For this form, it was conjectured in 1998 by Alon and Krivelevich~\cite{AlKr98} that if $\Delta_A=\Delta_B=\Delta$ (and vacuously $D_A=D_B=\Delta$), then for some absolute constant $C>0$, $k_A=k_B=C\log \Delta$ suffices. It was shown in this same setting in~\cite{ACK20+} that for each $\eps>0$, $k_A=\log \Delta$ and $k_B=(1+\eps)\Delta/\log\Delta$ suffices provided $\Delta$ is large enough.

Here we relax the setting somewhat by considering the consequences of our previous findings for a natural generalisation of list-covers.
A {\em correspondence-assignment} for $G$ via $L$ is a cover $H$ for $G$ via $L$ such that for each edge $vv'\in E(G)$, the subgraph induced between $L(v)$ and $L(v')$ is a matching. We call such an $H$ a {\em correspondence-cover} for $G$ with respect to $L$.
Every list-cover is a correspondence-cover.

Consider Problem~\ref{prob:general} assuming that $H$ is a correspondence-cover of $G$ with respect to $L$.
This leads to two natural forms of Problem~\ref{prob:general} that are strengthenings of the one that we studied with Alon in~\cite{ACK20+}.
If we only have conditions on $\Delta_A$ and $\Delta_B$ (so with $D_A\ge \Delta_A$, $D_B\ge \Delta_B$), then this is related to {\em correspondence colouring}~\cite{DvPo18,FHK16}.
If instead we only have conditions on $D_A$ and $D_B$ (so with $\Delta_A=\Delta_B=\infty$), then this is related to finding independent transversals with respect to partitions of local degree $1$, as originally proposed by Aharoni and Holzman, see~\cite{LoSu07}. Both of these stronger variants of list colouring have seen interesting recent advances, see~e.g.~\cite{Ber19,Mol18+,DKPS20+,GlSu20+,KaKe20+}.
Although here it is corollary to the work of Loh and Sudakov~\cite{LoSu07} that for each $\eps>0$, $k_A = k_B=(1+\eps)D$ suffices provided $\Delta$ is large enough, we will see how to improve on this statement in a way that is nearly optimal in various regimes.

The purpose of this note is to show the following progress towards Problem~\ref{prob:general} specific to correspondence-covers.

\begin{theorem}\label{thm:main}
Let $G$ and $H$ be bipartite graphs with bipartitions $(A_G,B_G)$, $(A_H,B_H)$, respectively, 
such that $H$ is a bipartite correspondence-cover of $G$ with respect to some $L: A_G\to 2^{A_H}, B_G\to 2^{B_H}$.
Assume one of the following conditions, as stated or with roles exchanged between $A$ and $B$.
\begin{enumerate}
\item\label{itm:transversals}
$k_B \ge (ek_A D_B)^{1/(k_A-1)}D_A$.
\item\label{itm:coupon}
\(
e(k_AD_A(k_BD_B-1)+1) \left(1-(1-1/k_B)^{D_A}\right)^{k_A} \le 1.
\)
\item\label{itm:couponDP}
\(
e(\Delta_A(\Delta_B-1)+1) \left(1-(1-1/k_B)^{\Delta_A\min\left\{1,k_B/k_A\right\}}\right)^{k_A} \le 1.
\)
\end{enumerate}
If the maximum degrees in $A_G$, $B_G$, $A_H$, $B_H$ are $\Delta_A$, $\Delta_B$, $D_A$, $D_B$, respectively, and $|L(v)|\ge k_A$ for all $v\in A_G$ and $|L(w)|\ge k_B$ for all $w\in B_G$, then $H$ admits an independent transversal with respect to $L$.
\end{theorem}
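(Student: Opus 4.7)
The plan is to apply the Lov\'asz Local Lemma in each of the three cases by sampling independently and uniformly $z_w \in L(w)$ for every $w \in B_G$, and by considering, for each $v \in A_G$, the bad event $X_v = \{\text{every } y \in L(v) \text{ is blocked}\}$, where ``$y \in L(v)$ is blocked'' means that $y$ is $H$-adjacent to $z_w$ for some $w \in N_G(v)$. If no $X_v$ occurs, an independent transversal of $H$ exists by selecting, for every $v$, any free $y \in L(v)$ and retaining the chosen $z_w$ on the $B$-side. By the asserted symmetry, it suffices to establish each condition as written.

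For (ii) and (iii), the key probabilistic estimate is
\[
\Pr[X_v] \le \prod_{y \in L(v)} \bigl(1 - (1-1/k_B)^{d_y}\bigr),
\]
where $d_y$ is the $H$-degree of $y$. I would obtain this from the negative association of the Bernoulli family $Y_{y,w} = [z_w = m(y,w)]$, indexed by the pairs $(y,w)$ such that $y \in L(v)$ has a matching partner $m(y,w) \in L(w)$: for each fixed $w$, at most one of the variables $Y_{\cdot,w}$ equals $1$ (since distinct elements of $L(v)$ have distinct matching partners in $L(w)$), a permutation-type distribution which is negatively associated; across $w$'s, independence is free. Since ``$y$ blocked'' is an increasing function of the $y$-slice $(Y_{y,w})_w$ and different $y$'s use disjoint slices, the product bound follows. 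For (ii), substituting $d_y \le D_A$ gives $(1 - (1-1/k_B)^{D_A})^{k_A}$. For (iii), I would use the concavity of $d \mapsto \log(1 - (1-1/k_B)^d)$ and Jensen's inequality to replace the product by a single factor evaluated at the arithmetic mean $\bar d = (\sum_y d_y)/k_A$; since each matching between $L(v)$ and $L(w)$ has at most $\min(k_A, k_B)$ edges and $v$ has at most $\Delta_A$ neighbours in $G$, we have $\bar d \le \Delta_A \min(1, k_B/k_A)$.

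The dependency counts are then standard: $X_v$ and $X_{v'}$ are mutually independent whenever $L(v)$ and $L(v')$ share no common $L(w)$ with $H$-neighbours in both. Tracing through $H$-degrees yields dependency $\le k_A D_A(k_B D_B - 1)$ in (ii); through $G$-neighbourhoods alone, $\le \Delta_A(\Delta_B - 1)$ in (iii). The symmetric LLL $ep(d+1) \le 1$ then closes both cases exactly as stated.

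For (i), the exponent $k_A - 1$ rather than $k_A$ indicates a slightly more refined bound. My plan would be to enumerate certificates of $X_v$ as injective maps $\pi : L(v) \to N_G(v)$ (injectivity is again forced by the matching structure) witnessing $z_{\pi(y)} = m(y,\pi(y))$ for every $y \in L(v)$, then isolate a distinguished $y^* \in L(v)$ whose contribution is accounted for separately, aiming to reduce the exponent of $D_A/k_B$ by one. Combined with a dependency of order $k_A D_B$, an application of LLL would then yield (i). The principal obstacle I foresee is performing the refined counting for (i) precisely enough to recover the $k_A - 1$ exponent; the negative-association step for (ii) and (iii) is by contrast a clean consequence of the matching structure.
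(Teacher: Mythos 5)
For conditions~\ref{itm:coupon} and~\ref{itm:couponDP} you have essentially the paper's argument: sample the $B$-side uniformly and independently, take $T_v$ (your $X_v$) to be the event that every vertex of $L(v)$ is blocked, bound $\Pr[T_v]$ by a product over $L(v)$ using negative correlation, substitute $d_y\le D_A$ (resp.\ use Jensen with $\bar d\le\Delta_A\min\{1,k_B/k_A\}$), and finish with the symmetric LLL with the stated dependency degrees. Your justification of the product bound differs in spirit from the paper's --- you invoke negative association of a permutation-type Bernoulli family, while the paper proves the stronger claim $\Pr(\forall c\in I\colon T_{v,c})\le\prod_{c\in I}\Pr(T_{v,c})$ directly by induction on $|I|$ using the conditional monotonicity $\Pr(\forall c\in I\colon T_{v,c}\mid\lnot T_{v,c'})\ge\Pr(\forall c\in I\colon T_{v,c})$ --- but both are correct routes to the same inequality, and the induction is arguably the more elementary one since it avoids citing a general negative-association framework.

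Condition~\ref{itm:transversals} is where you have a genuine gap, which you yourself flag. Your plan to refine the random-sampling/LLL analysis by enumerating injective certificate maps and isolating a distinguished $y^*$ is not carried out, and it is not clear it would recover the exponent $k_A-1$ from that side of the argument: the obvious union bound over certificates gives roughly $(D_A/k_B)^{k_A}$, and the dependency structure of the $T_v$'s is governed by $D_A$-type quantities rather than $D_B$, so the factor $k_A D_B$ in condition~\ref{itm:transversals} does not emerge naturally. The paper instead changes viewpoint entirely for this case: it builds a $k_A$-uniform hypergraph $\mathscr H$ on $V(H)$ whose parts are the sets $L(w)$, $w\in B_G$ (so of size $k_B$), where a hyperedge is a $k_A$-set meeting distinct parts and perfectly matched into $L(v)$ for some $v\in A_G$, exactly encoding a blocking configuration for $v$. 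One then counts: a fixed vertex $w_1$ lies in at most $D_B\, D_A^{k_A-1}$ hyperedges ($D_B$ choices identifying $v$ via $w_1$'s $H$-neighbour, then $D_A$ choices for each of the other $k_A-1$ members), giving a per-part degree sum of $k_B D_B D_A^{k_A-1}$, and applies a separate independent-transversal-in-hypergraphs lemma (Lemma~\ref{thm:transversal}) whose hypothesis $\ell^{k}\ge e(k(\Delta-1)+1)$ with $\ell=k_B$, $k=k_A$, $\Delta=k_B D_B D_A^{k_A-1}$ is exactly condition~\ref{itm:transversals}. This hypergraph reformulation is the missing idea; without it, the $k_A-1$ exponent and the $D_B$-dependence in condition~\ref{itm:transversals} remain out of reach by your proposed method.
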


This result is quite similar to one in earlier work~\cite[Thm.~4]{ACK20+} and uses the same methods, but here the setting is considerably stronger. To illustrate, we next indicate how each of the three conditions in Theorem~\ref{thm:main} is close to sharp in certain regions. We do not have the same sharpness in the list-cover case, and so further progress for list-covers (and, hopefully, in the conjecture of Alon and Krivelevich~\cite{AlKr98}) will have to take advantage of some special structure not necessarily present in correspondence-covers.

We highlight and discuss three corollaries of  Theorem~\ref{thm:main}. 

First from condition~\ref{itm:coupon} of Theorem~\ref{thm:main} we may conclude the following symmetric result. 

\begin{corollary}\label{cor:symmetric}
For each $\eps>0$, the following holds for $D_0$ sufficiently large.
Let $G$ and $H$ be bipartite graphs such that $H$ is a correspondence-cover of $G$ with respect to some $L: V(G) \to 2^{V(H)}$.
If $H$ has maximum degree $D\ge D_0$ and $|L(v)| \ge (1+\eps)D/\log D$ for all $v\in V(G)$, 
then $H$ admits an independent transversal with respect to $L$.
\end{corollary}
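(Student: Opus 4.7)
The plan is to derive the corollary directly from condition~\ref{itm:coupon} of Theorem~\ref{thm:main}, applied in the symmetric regime $D_A=D_B=D$ and $k_A=k_B=k$. First I would reduce to the case where every part has size exactly $k:=\lceil(1+\eps)D/\log D\rceil$: given the hypothesised cover $(H,L)$, for each $v\in V(G)$ pick an arbitrary subset $L'(v)\subseteq L(v)$ with $|L'(v)|=k$, and let $H'$ be the subgraph of $H$ induced on $\bigcup_v L'(v)$. Since any submatching is still a matching, $(H',L')$ is again a bipartite correspondence-cover of $G$; its maximum degree is at most $D$, and any independent transversal of $(H',L')$ is one of $(H,L)$. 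So it suffices to verify condition~\ref{itm:coupon} of Theorem~\ref{thm:main} in the form
\[
e\bigl(kD(kD-1)+1\bigr)\left(1-\left(1-\tfrac{1}{k}\right)^{D}\right)^{k}\le 1.
\]

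The second step is the standard asymptotic estimate. Using $\ln(1-1/k)=-1/k-O(1/k^{2})$, one has
\[
\left(1-\tfrac{1}{k}\right)^{D}=\exp\!\left(-\tfrac{D}{k}-O\!\left(\tfrac{D}{k^{2}}\right)\right).
\]
With $k=(1+\eps)D/\log D$ we get $D/k=\log D/(1+\eps)$ and $D/k^{2}=O(\log^{2}D/D)=o(1)$, so $(1-1/k)^{D}=(1+o(1))D^{-1/(1+\eps)}$. Hence
\[
1-\left(1-\tfrac{1}{k}\right)^{D}\le 1-(1-o(1))D^{-1/(1+\eps)}.
\]

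The third step is to bound the $k$-th power. Using $1-x\le e^{-x}$,
\[
\left(1-\left(1-\tfrac{1}{k}\right)^{D}\right)^{k}\le \exp\!\left(-(1-o(1))\,k\,D^{-1/(1+\eps)}\right)=\exp\!\left(-(1+\eps-o(1))\,\frac{D^{\eps/(1+\eps)}}{\log D}\right).
\]
Since $\eps>0$, the exponent tends to $-\infty$ faster than any multiple of $\log D$, whereas the prefactor $e(kD(kD-1)+1)=O(D^{4}/\log^{2}D)$ is only polynomial in $D$. Thus the product is $o(1)$, and in particular is at most $1$ once $D\ge D_{0}$ for some large enough $D_{0}=D_{0}(\eps)$. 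Condition~\ref{itm:coupon} then yields the desired independent transversal.

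The only delicate point is the uniformity of the $o(1)$ terms in $\eps$; this is handled by choosing $D_{0}$ depending on $\eps$, which is allowed by the statement. There is no other obstacle: the argument is a direct plug-in to Theorem~\ref{thm:main}\ref{itm:coupon} together with the one-line estimate $(1-1/k)^{D}\sim D^{-1/(1+\eps)}$.
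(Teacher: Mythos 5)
Your proposal is correct and follows essentially the same route as the paper: the paper deduces Corollary~\ref{cor:symmetric} from the slightly more general Theorem~\ref{thm:local}, whose relevant branch (the case $D_B\le D_A^2$, which is all that is needed in the symmetric setting $D_A=D_B=D$) is exactly your plug-in of condition~\ref{itm:coupon} with $k_A=k_B=k=(1+\eps)D/\log D$ and the estimate $(1-1/k)^{D}\approx D^{-1/(1+\eps)}$ beating the polynomial prefactor. The only difference is cosmetic: the paper also handles unequal degrees $D_A,D_B$ via condition~\ref{itm:transversals} when $D_B\ge D_A^2$, which your symmetric argument does not need.
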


\noindent
For an appreciation of the strength of this result, let us note that the part size bound in Corollary~\ref{cor:symmetric} is sharp up to an asymptotic factor $2$, see~\cite[Thm.~1]{KPV05}.

Note that an immediate consequence of Corollary~\ref{cor:symmetric} is that if we assume (moreover) that the {\em covered} graph $G$ has maximum degree $\Delta \le D$, then the same conclusion holds. This is equivalent to correspondence colouring of bipartite graphs, and so this weaker assertion also follows from a recent result of Bernshteyn~\cite{Ber19} (see also~\cite{DJKP20}) on correspondence colouring of triangle-free graphs.
On the other hand, if one analogously relaxes the conditions on $G$ and $H$ in Corollary~\ref{cor:symmetric}, i.e.~suppose instead that $G$ and $H$ are triangle-free, then it is unknown whether or not a part size bound that is $o(D)$ as $D\to\infty$ suffices. It could even be possible for the following to be true.

\begin{conjecture}\label{conj:trianglefree}
For each $\eps>0$, the following holds for $D_0$ sufficiently large.
Let $G$ and $H$ be graphs such that $H$ is a correspondence-cover of $G$ with respect to some $L: V(G) \to 2^{V(H)}$.
If $G$ is triangle-free, $H$ has maximum degree $D\ge D_0$, and $|L(v)| \ge (1+\eps)D/\log D$ for all $v\in V(G)$, 
then $H$ admits an independent transversal with respect to $L$.
\end{conjecture}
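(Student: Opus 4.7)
My plan is to first reduce to a more structured setting. Fix $u \in L(v) \subseteq V(H)$ with $H$-neighbours $u_1 \in L(v_1)$ and $u_2 \in L(v_2)$. Since the parts of $L$ are independent in $H$, we have $v\neq v_1$ and $v\neq v_2$; since $H$ is a cover of $G$, $vv_1$ and $vv_2$ are edges of $G$; and if moreover $u_1u_2\in E(H)$ then $v_1\neq v_2$ and $v_1v_2 \in E(G)$, contradicting the assumption that $G$ is triangle-free. Consequently every open $H$-neighbourhood is independent in $H$, and in particular $H$ itself is triangle-free. This reduces the conjecture to: any triangle-free graph $H$ of maximum degree $D$ equipped with a ``local degree $1$'' partition $L$ (edges between distinct parts form matchings) into parts of size at least $(1+\eps)D/\log D$ admits an independent transversal.

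With that reduction in hand, I would attempt to adapt the semi-random wasteful-colouring argument of Molloy~\cite{Mol18+}, in the correspondence-colouring form due to Bernshteyn~\cite{Ber19}, from its native list/correspondence-colouring setting to this independent-transversal setting. One runs $\Theta(\log\log D)$ nibble rounds. In each round, for every part $L(v)$ not yet permanently assigned, tentatively select a uniformly random candidate $x_v \in L(v)$; activate exactly those candidates $x_v$ that have no $H$-conflict with another candidate $x_{v'}$; permanently place the activated candidates into the transversal; then delete them together with their $H$-neighbours, and remove their parts. At each surviving part $L(v)$ one tracks the residual list size $t(v)$ and an effective residual degree $d(v) \le D$, and argues via Azuma--Talagrand concentration combined with the Lov\'asz Local Lemma that an invariant of the form $t(v)\log d(v)/d(v) \ge 1+\eps/2$ is preserved throughout. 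Triangle-freeness of $H$ enters precisely as in Molloy's analysis: the tentative choices lying in the independent set $N_H(u)$ are seldom mutually conflicting, so the number of distinct $H$-neighbours of $u$ that get blocked contracts by a factor of order $\log d$ compared to a generic instance, which is what drives the $D/\log D$ threshold.

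The principal obstacle is that both Molloy's and Bernshteyn's arguments bound the number of constraints per vertex in terms of the maximum degree of the \emph{underlying} graph being coloured, which here is $\Delta_G$, whereas only the cover's maximum degree $D$ is controlled and $\Delta_G$ may be arbitrarily large relative to $D$. One must exploit the local degree~$1$ condition to show that, at any $u \in V(H)$, only $\deg_H(u) \le D$ constraints are ever effectively binding — the same philosophy that replaces $\Delta_A$ with $\Delta_A\min\{1, k_B/k_A\}$ in condition~\ref{itm:couponDP} of Theorem~\ref{thm:main}. A residual-instance step, handled by Haxell's theorem~\cite{Hax01} or by a local-lemma argument in the flavour of condition~\ref{itm:transversals}, would then finish the proof. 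Pushing Molloy's delicate concentration through in this local-degree regime, while simultaneously certifying that the local degree~$1$ property survives the random deletions, is the crux of the difficulty, and is presumably the reason the statement is recorded only as a conjecture.
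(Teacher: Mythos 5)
There is nothing in the paper to compare your argument against: the statement you are addressing is recorded in the paper as Conjecture~\ref{conj:trianglefree}, explicitly left open (the paper only points to the weaker list-colouring results of Alon--Assadi and Amini--Reed as indirect evidence). Your opening reduction is correct and worth stating: since parts are independent and edges of $H$ lie only above edges of $G$, any triangle in $H$ would project to a triangle in $G$, so $H$ inherits triangle-freeness, and the correspondence condition gives a partition of local degree $1$. Be aware, though, that the reformulation you pass to --- an arbitrary triangle-free $H$ of maximum degree $D$ with a local-degree-$1$ partition into parts of size $(1+\eps)D/\log D$ --- is strictly \emph{more} general than the conjecture, because the pattern graph of such a partition need not be triangle-free even when $H$ is; you are therefore proposing to prove something at least as hard.

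Beyond that reduction, what you have written is a programme rather than a proof, and you say so yourself: the entire quantitative core is missing. Concretely, (i) Molloy's and Bernshteyn's analyses control both the Local Lemma dependencies and the Talagrand-type concentration through the maximum degree of the graph being coloured, which here is $\Delta_G$ and is unbounded; you assert that local degree $1$ should let $D$ play that role, but this is precisely the step that is not carried out (and in the paper it is only achieved for the \emph{one-shot} coupon-collector argument behind conditions~\ref{itm:coupon} and~\ref{itm:couponDP}, where the bad events can be confined to one side of a bipartition --- an option unavailable for general triangle-free $G$). (ii) The mechanism by which triangle-freeness yields the $\log$ factor in Molloy's proof (coincidences among neighbours' colour choices shrinking the number of distinct lost colours) does not transfer verbatim to the transversal setting, where a vertex of $L(v)$ is blocked by any chosen $H$-neighbour and, by local degree $1$, distinct chosen vertices block distinct elements of $L(v)$; so the source of the saving has to be re-derived, not just cited. (iii) Preservation of the invariant $t(v)\log d(v)/d(v)\ge 1+\eps/2$ across rounds, the survival of the local-degree-$1$ structure under deletions, and the Haxell finish are all left as intentions. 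So the proposal contains a genuine gap --- in effect the whole argument after the first paragraph --- and cannot be accepted as a proof of the conjecture; it is, at best, a plausible plan of attack consistent with the reason the paper leaves the statement open.
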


\noindent
It is worth mentioning that there is independent supporting evidence towards Conjecture~\ref{conj:trianglefree}.  Specifically Amini and Reed~\cite{AmRe08} and Alon and Assadi~\cite{AlAs20+} independently obtained a particular list-colouring result for triangle-free graphs, a result which is only slightly weaker than the statement of Conjecture~\ref{conj:trianglefree}.
If true, Conjecture~\ref{conj:trianglefree} would directly extend along an important line of work going back to the seminal results of Ajtai, Koml\'os, Szemer\'edi~\cite{AKS81} and Johansson~\cite{Joh96}.
If one were bolder, one could also posit Conjecture~\ref{conj:trianglefree} holding under further relaxed conditions, namely, that $G$ is complete and $H$ is triangle-free.

Second from condition~\ref{itm:couponDP} of Theorem~\ref{thm:main} the next asymmetric result follows easily. This is a modest generalisation of an earlier result for list-covers~\cite[Cor.~10]{ACK20+}.

\begin{corollary}\label{cor:symmetricasymmetric}
For each $\eps>0$, the following holds for $\Delta_0$ sufficiently large.
Let $G$ and $H$ be bipartite graphs with bipartitions $(A_G,B_G)$, $(A_H,B_H)$, respectively, 
such that $H$ is a bipartite correspondence-cover of $G$ with respect to some $L: A_G\to 2^{A_H}, B_G\to 2^{B_H}$.
If $G$ has maximum degree $\Delta \ge \Delta_0$, $|L(v)| \ge (1+\eps)\Delta/\log_4\Delta$ for all $v\in A_G$, and $|L(w)|=2$ for all $w\in B_G$, 
then $H$ admits an independent transversal with respect to $L$.
\end{corollary}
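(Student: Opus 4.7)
The plan is to derive Corollary~\ref{cor:symmetricasymmetric} by a direct application of condition~\ref{itm:couponDP} of Theorem~\ref{thm:main}, with $A$ and $B$ playing the roles they have in the corollary (i.e., without invoking the ``roles exchanged'' option). Set $k_A = (1+\eps)\Delta/\log_4\Delta$ and $k_B = 2$, and note $\Delta_A,\Delta_B \le \Delta$. For $\Delta$ large enough we have $k_B \le k_A$, so $\min\{1, k_B/k_A\} = 2/k_A$, and the exponent appearing inside condition~\ref{itm:couponDP} simplifies to $2\Delta_A/k_A$. The remaining task is just to verify that this choice of parameters satisfies the stated inequality for all $\Delta \ge \Delta_0(\eps)$.

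The core calculation is this: since $(1/2)^x$ is decreasing in $x$ and $\Delta_A\le\Delta$,
\[
\left(1-\tfrac{1}{k_B}\right)^{2\Delta_A/k_A} \ge \left(\tfrac12\right)^{2\Delta/k_A} = 4^{-\log_4\Delta/(1+\eps)} = \Delta^{-1/(1+\eps)}.
\]
Applying $(1-t)^n \le e^{-tn}$ then gives
\[
\left(1 - \Delta^{-1/(1+\eps)}\right)^{k_A} \le \exp\!\left(-\frac{(1+\eps)\,\Delta^{\eps/(1+\eps)}}{\log_4\Delta}\right),
\]
so together with $e(\Delta_A(\Delta_B-1)+1) \le e\Delta^2$, the left-hand side of condition~\ref{itm:couponDP} is at most $e\Delta^2\exp(-(1+\eps)\Delta^{\eps/(1+\eps)}/\log_4\Delta)$. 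The key point is that the argument of the exponential grows as a positive power of $\Delta$, dominating the $\Delta^2$ prefactor, so the whole expression tends to $0$ as $\Delta\to\infty$. Taking $\Delta_0(\eps)$ large enough, Theorem~\ref{thm:main} then delivers the required independent transversal.

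I expect no substantive obstacle here; the whole derivation is arithmetic. The design choice worth flagging is that condition~\ref{itm:couponDP} must be applied \emph{without} swapping $A$ and $B$. Had we swapped, the min-truncation would disengage (since then $k_B/k_A$ would exceed $1$), the exponent inside would become $\Delta_A$, and $(1-1/k_B)^{\Delta_A}$ would be only polynomially small in $\Delta$; the outer base would therefore be close to $1$, and after raising to $k_A=2$ and multiplying by $e\Delta^2$ the bound would fail. The base $\log_4$ in the hypothesis of the corollary is calibrated precisely to the base $(1-1/k_B) = 1/2$ of the inner expression: the ``$2$'' of $\log_4$ cancels the ``$2$'' coming from the min-truncation $2/k_A$, cleanly producing $\Delta^{-1/(1+\eps)}$ and hence the gain of the extra $\Delta^{\eps/(1+\eps)}$ factor in the exponent that carries the argument.
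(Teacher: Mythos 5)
Your derivation is correct and is exactly the route the paper intends: the paper gives no explicit proof of Corollary~\ref{cor:symmetricasymmetric}, simply asserting that it ``follows easily'' from condition~\ref{itm:couponDP} of Theorem~\ref{thm:main}, and your calculation (with $k_B=2$, $k_A=(1+\eps)\Delta/\log_4\Delta$, the min-truncation giving exponent $2\Delta_A/k_A$, and the resulting $\Delta^{-1/(1+\eps)}$ inner term) fills in precisely that verification, including the correct observation that the roles of $A$ and $B$ must not be exchanged. The only cosmetic point is to take $k_A$ to be, say, $\lfloor(1+\eps)\Delta/\log_4\Delta\rfloor$ so that it is an integer, which changes nothing in the asymptotics.
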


\noindent
Our reason for highlighting this bound in particular though is that the part size bound in Corollary~\ref{cor:symmetricasymmetric} is asymptotically sharp, as certified by the following construction.

\begin{proposition}\label{prop:symmetricasymmetric}
For infinitely many $\Delta$, there exist bipartite graphs $G$ and $H$ with bipartitions $(A_G,B_G)$, $(A_H,B_H)$, respectively, 
such that $H$ is a bipartite correspondence-cover of $G$ with respect to some $L: A_G\to 2^{A_H}, B_G\to 2^{B_H}$ and such that the following holds.
The maximum degree of $G$ is $\Delta$, $|L(v)| = \Delta/\log_4\Delta$ for all $v\in A_G$, $|L(w)|=2$ for all $w\in B_G$, and
$H$ does not admit an independent transversal with respect to $L$.
\end{proposition}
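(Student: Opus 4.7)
The plan is to exhibit $G$, $H$, and $L$ satisfying the stated parameters by a semi-probabilistic construction and verify the absence of an independent transversal via a first-moment count. Fix a large integer $k$ and choose $\Delta$ along a sparse sequence (e.g.\ $\Delta = 4^n$ for $n$ a power of $2$) so that $k = \Delta/\log_4\Delta$ is an integer; this gives infinitely many valid pairs $(k,\Delta)$. Take $G = K_{A_G,B_G}$ to be complete bipartite on parts $A_G, B_G$ of sizes $a, b$ with $\max\{a,b\}=\Delta$, so the maximum degree of $G$ is $\Delta$. Assign disjoint lists $L(v)$ of size $k$ to each $v \in A_G$, and $L(w) = \{(w,0),(w,1)\}$ to each $w \in B_G$. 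For each edge $vw \in E(G)$, independently pick a uniformly random ordered pair $(c^0_{vw}, c^1_{vw})$ of distinct elements of $L(v)$ and let the correspondence cover match $(w,c) \leftrightarrow c^c_{vw}$ for $c \in \{0,1\}$.

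An independent transversal is a pair $(\phi, \psi)$ with $\phi(v) \in L(v)$, $\psi(w) \in L(w)$, satisfying $\phi(v) \ne c^{\psi(w)}_{vw}$ for every edge $vw$. For fixed $\phi$ a brief inclusion--exclusion in $\psi(w) \in \{0, 1\}$ yields
\[
 \Pr[\phi \text{ extends to an IT}] \;=\; \bigl(2(1 - 1/k)^a - (1 - 2/k)^a\bigr)^b ,
\]
so the expected number of independent transversals is $k^a \bigl(2(1 - 1/k)^a - (1 - 2/k)^a\bigr)^b$. I would then pick $a, b$ with $\max\{a,b\}=\Delta$ to make this quantity less than $1$; by the first-moment method some realisation of the random cover has no IT, giving the desired construction.

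The main obstacle lies in saturating the constant factor exactly at $k = \Delta/\log_4\Delta$. A direct optimisation of the formula above over $a$ shows that the purely random construction can only reach $k \lesssim \Delta/\ln\Delta = (\log_4 e) \cdot \Delta/\log_4\Delta \approx 0.72\, \Delta/\log_4\Delta$, short of the target by the factor $\ln 4$. To recover this factor one has to exploit further that $|L(w)| = 2$ supplies \emph{two} forbidden colours per edge, effectively doubling the coupon-collection rate and turning base-$e$ logarithms into base-$4$ ones. I would therefore replace the uniformly random matchings by a structured design: correlating the pairs $(c^0_{vw}, c^1_{vw})$ across the neighbours $v \in N(w)$ of each $w$ so that each $w$ performs two coupled coupon-collector trials on $L(v)$ simultaneously. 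The concrete design is guided by the sharpness construction for the list-cover case in~\cite[Cor.~10]{ACK20+} and the coupon-collector extremal example of~\cite[Thm.~1]{KPV05}; making this precise and verifying the counting inequality at the exact target $k = \Delta/\log_4\Delta$ is the technical crux of the proof.
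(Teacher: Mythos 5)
There is a genuine gap, and it sits exactly where you flag it: your uniformly random correspondence-cover, analysed by the first moment, tops out (as you yourself compute) at part sizes around $\Delta/\ln\Delta$, while the proposition demands failure already at $\Delta/\log_4\Delta=(\ln 4)\,\Delta/\ln\Delta$. The step that would close this factor $\ln 4$ --- the ``structured design'' you defer to the references --- is the entire content of the proposition, and the direction you sketch (correlating the pairs $(c^0_{vw},c^1_{vw})$ so that each $w$ runs ``two coupled coupon-collector trials'') is not the mechanism that works: the two-sided choice at each $w$ is already present in your random model (it is what squares the failure probability), and it does not change the base of the logarithm. So as written the proposal proves only a weaker statement with a worse constant, and the sharp constant is the point of the proposition, since it matches Corollary~\ref{cor:symmetricasymmetric}.

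The paper's proof is a fully explicit, deterministic encoding argument rather than a probabilistic one. Take $\Delta=2^{2^k}$, $G=K_{\Delta,\Delta}$, and partition $B_G$ into $\Delta/\log_2\Delta$ blocks $C_i$ of size $\log_2\Delta$; each block, having $\log_2\Delta$ parts of size $2$, has exactly $2^{\log_2\Delta}=\Delta$ transversals $T^i_1,\dots,T^i_\Delta$, one ``address'' per vertex of $A_G$. Each list $L(v_j)$, of size $\Delta/\log_4\Delta=2\Delta/\log_2\Delta$, is split into $\Delta/\log_2\Delta$ pairs, one pair dedicated to each block, and the matchings between $C_i$ and the $i$-th pair of $L(v_j)$ are wired according to $T^i_j$: the chosen vertex of each $2$-list in $C_i$ is matched to $p^{v_j}_i$ and the non-chosen one to $\overline{p}^{v_j}_i$. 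Consequently, once a transversal is fixed on $C_i$, its dedicated pair survives in at most $2$ of the $\Delta$ lists $L(v_j)$ (the one whose address equals the chosen transversal, and the one whose address is its complement), so any transversal of $B_H$ leaves at most $2\Delta/\log_2\Delta<\Delta$ vertices of $A_H$ unblocked and cannot be completed through all $\Delta$ parts on the $A$-side. If you want to salvage your write-up, this blocking-and-addressing construction is what must replace the random matchings; no first-moment calculation is then needed.
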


\noindent
In the special case of $H$ a list-cover, neither a tightness result analogous to Proposition~\ref{prop:symmetricasymmetric} nor a stronger form of Corollary~\ref{cor:symmetricasymmetric} is known to hold.

As with Corollary~\ref{cor:symmetric}, one might wonder whether  Corollary~\ref{cor:symmetricasymmetric} could be strengthened to hold in the more general situation that we bound instead the maximum degree of the correspondence-cover $H$, say, by $D$.
A construction similar to that used in Proposition~\ref{prop:symmetricasymmetric} shows that this is impossible, and in fact far from possible in that if the $B$-parts are held to size $2$, then the $A$-parts cannot be size $o(D^{8/5})$; see Proposition~\ref{prop:symmetricasymmetricD} below.
One might also wonder what happens when we further relax (at least in part) the condition that $H$ be a correspondence-cover.
In Section~\ref{sec:asymHax} we observe how an aforementioned theorem of Haxell~\cite{Hax01} applies to yield the following result.

\begin{proposition}\label{prop:asymHax}
	Let $G$ and $H$ be bipartite graphs $G$ and $H$ with bipartitions $(A_G,B_G)$, $(A_H,B_H)$, respectively, 
	such that $H$ is a bipartite cover of $G$ with respect to some $L: A_G\to 2^{A_H}, B_G\to 2^{B_H}$ and such that the following holds.
	The maximum degree of $H$ is $D$, $|L(v)| \ge 2D^2$ for all $v\in A_G$, $|L(w)|=2$ for all $w\in B_G$, and no vertex of $A_H$ is adjacent to both vertices of $L(w)$ for some $w\in B_G$. Then $H$ has an independent transversal with respect to $L$.
	Moreover, the conclusion may fail if the part size condition $2D^2$ is replaced by $D^2$.
\end{proposition}

\noindent
It would be interesting to narrow the gap between $D^2$ and $2D^2$ in the above result. Similarly, in the correspondence-cover version of this problem, it would be interesting to decide on the correct asymptotic behaviour for the analogous term, which we know must lie between $\Theta(D^{8/5})$ (Proposition~\ref{prop:symmetricasymmetricD}) and $2D^2$.

Third let us consider a different asymmetric situation: suppose in Problem~\ref{prob:general} that $k_A=k_B=k$. Then from condition~\ref{itm:transversals} in Theorem~\ref{thm:main} we can read off the following.

\begin{corollary}\label{cor:asymmetric}
Let $G$ and $H$ be bipartite graphs with bipartitions $(A_G,B_G)$, $(A_H,B_H)$, respectively, 
such that $H$ is a bipartite correspondence-cover of $G$ with respect to some $L: A_G\to 2^{A_H}, B_G\to 2^{B_H}$.
If $H$ has maximum degree $1$ in part $A_H$ and maximum degree $k^{k-2}/e$ in part $B_H$, and $|L(v)| \ge k$ for all $v\in V(G)$, 
then $H$ admits an independent transversal with respect to $L$.
\end{corollary}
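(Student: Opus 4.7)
The plan is to apply condition~\ref{itm:transversals} of Theorem~\ref{thm:main} directly; no further ingredients are required. In the setup of Corollary~\ref{cor:asymmetric} we have $k_A = k_B = k$, $D_A = 1$, and $D_B = k^{k-2}/e$. Substituting into the right-hand side of condition~\ref{itm:transversals} gives
\[
(e k_A D_B)^{1/(k_A - 1)} D_A = \left(e \cdot k \cdot \frac{k^{k-2}}{e}\right)^{1/(k-1)} \cdot 1 = \left(k^{k-1}\right)^{1/(k-1)} = k = k_B,
\]
so the inequality holds (with equality). Theorem~\ref{thm:main} then guarantees the desired independent transversal.

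Since the derivation is a single algebraic identity, there is essentially no obstacle to overcome once Theorem~\ref{thm:main} is in hand. The only mild subtlety lies in choosing the labelling: condition~\ref{itm:transversals} must be applied with $A$ as the low-degree side (so that $D_A = 1$) rather than with the roles exchanged — if we swapped the labels, the bound required would be of order $k^{k-2}/e$ rather than $k$ and would fail badly for large $k$. The content of the corollary is really that the threshold in condition~\ref{itm:transversals} collapses to the clean expression $k^{k-2}/e$ precisely when $D_A = 1$ and $k_A = k_B = k$.
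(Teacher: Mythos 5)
Your proposal is correct and matches the paper's intent exactly: the corollary is simply "read off" from condition~\ref{itm:transversals} of Theorem~\ref{thm:main} with $k_A=k_B=k$, $D_A=1$, $D_B=k^{k-2}/e$, where the right-hand side evaluates to $k$ as you compute. Your remark about choosing the labelling with $A$ as the low-degree side is also the right (and only) point of care.
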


\noindent
Note that this statement is utterly trivial if $H$ is a list-cover, for then it boils down to colouring a forest of stars for which each leaf-vertex has more than one colour in its list.
Curiously, the statement as is for $H$ a correspondence-cover is tight up to some $O(k)$ factor.

\begin{proposition}\label{prop:asymmetric}
There exist bipartite graphs $G$ and $H$ with bipartitions $(A_G,B_G)$, $(A_H,B_H)$, respectively, 
such that $H$ is a bipartite correspondence-cover of $G$ with respect to some $L: A_G\to 2^{A_H}, B_G\to 2^{B_H}$ and such that the following holds.
The maximum degree of $H$ in part $A_H$ is $1$ and in part $B_H$ is $k^{k-1}$, $|L(v)| = k$ for all $v\in V(G)$, and
$H$ does not admit an independent transversal with respect to $L$.
\end{proposition}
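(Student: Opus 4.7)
The plan is to give an explicit construction in which each $v \in A_G$ encodes a clause forbidding one particular colouring of $B_G$, and $A_G$ enumerates all such clauses. Concretely, I take $B_G = \{w_1, \ldots, w_k\}$ and $A_G = \{v_{\vec c} : \vec c \in [k]^k\}$, with $G$ the complete bipartite graph between the two. Each vertex of $G$ receives a list of size $k$; I label them $L(w_j) = \{w_j^{(1)},\ldots,w_j^{(k)}\}$ and $L(v_{\vec c}) = \{v_{\vec c}^{(1)},\ldots,v_{\vec c}^{(k)}\}$. The correspondence on the edge $v_{\vec c}w_j$ will consist of the single matching edge $v_{\vec c}^{(j)} w_j^{(c_j)}$, with every other pair across $L(v_{\vec c}) \times L(w_j)$ left unmatched; this is a (partial) matching, as required.

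Next I will check the degree conditions. By construction each $v_{\vec c}^{(j)} \in A_H$ participates in exactly one correspondence edge, so the maximum degree in $A_H$ equals $1$. A vertex $w_j^{(c)} \in B_H$ is paired with $v_{\vec c}^{(j)}$ precisely for the $\vec c$ satisfying $c_j = c$; there are $k^{k-1}$ such tuples, hitting the target degree. The list-size requirement is immediate by construction.

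To verify the non-existence of an independent transversal I will argue by contradiction. Given such a transversal, let $c_j^* \in [k]$ record its choice at $w_j$, set $\vec c^{*} = (c_1^*, \ldots, c_k^*)$, and focus on $v_{\vec c^{*}}$: for every $j \in [k]$, the element $v_{\vec c^{*}}^{(j)}$ is, by construction, paired to the selected $w_j^{(c_j^*)}$, hence blocked. No choice remains at $v_{\vec c^{*}}$, contradicting the assumed transversal.

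Beyond careful indexing I do not expect any real obstacle; this is a direct combinatorial construction. The conceptual content is that allowing each $v \in A_G$ a single forbidden pair per colour lets $v$ encode exactly one tuple of $[k]^k$, so the $k^k$ vertices of $A_G$ collectively block every potential transversal, and the degree $k^{k-1}$ in $B_H$ is forced by a coordinate-fibre count.
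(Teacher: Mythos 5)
Your construction is correct and is essentially the paper's own proof: the paper also takes $|B_G|=k$, $|A_G|=k^k$, and for each $k$-tuple of $\prod_{w\in B_G}L(w)$ adds a perfect matching from that tuple to $L(v)$ for a distinct $v\in A_G$; your indexing merely makes the "arbitrary" matching explicit. The degree counts ($1$ in $A_H$, $k^{k-1}$ in $B_H$) and the blocking argument match the paper's reasoning.
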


Thus Corollaries~\ref{cor:symmetric},~\ref{cor:symmetricasymmetric}, and~\ref{cor:asymmetric} cannot be improved much, and so neither can conditions~\ref{itm:coupon},~\ref{itm:couponDP}, and~\ref{itm:transversals}, respectively, of Theorem~\ref{thm:main}.
Our motivation from~\cite{AlKr98,ACK20+} is Problem~\ref{prob:general} for the special case of list-covers $H$, but this note proves that further progress along these lines needs some special insight specific to list-covers but not correspondence-covers.

\subsection*{Probabilistic preliminaries}

We will use the following standard version of the local lemma.

\begin{slll}[\cite{ErLo75}]
Take a set $\cal E$ of (bad) events such that for each $A\in \cal E$
\begin{enumerate}
\item $\Pr(A) \le p < 1$, and
\item $A$ is mutually independent of a set of all but at most $d$ of the other events.
\end{enumerate}
If $ep(d+1)\le1$, then with positive probability none of the events in $\cal E$ occur.
\end{slll}

\section{Proofs}\label{sec:proofs}

Before proceeding to the main proofs, let us first show how Corollary~\ref{cor:symmetric} follows from Theorem~\ref{thm:main}.
We in fact have the following slightly more general statement. We note that this statement is reminiscent of ``local'' list-colouring results in which the list sizes can vary depending on the structural parameters of the individual vertices (such as their degree); see~\cite{DKPS20+} and especially Sec.~8.2 therein for one of the most general results along these lines.

\begin{theorem}\label{thm:local}
For each $\eps>0$, the following holds for $D_0$ sufficiently large.
Let $G$ and $H$ be bipartite graphs with bipartitions $(A_G,B_G)$, $(A_H,B_H)$, respectively, 
such that $H$ is a bipartite correspondence-cover of $G$ with respect to some $L: A_G\to 2^{A_H}, B_G\to 2^{B_H}$.
If $H$ has maximum degree $D_A$ in part $A_H$ and maximum degree $D_B$ in part $B_H$ for $D_A,D_B \ge D_0$, $|L(v)| \ge (1+\eps)D_A/\log D_A$ for all $v\in A_G$, and $|L(w)| \ge (1+\eps)D_B/\log D_B$ for all $w\in B_G$, 
then $H$ admits an independent transversal with respect to $L$.
\end{theorem}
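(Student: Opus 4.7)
The plan is to deduce Theorem~\ref{thm:local} from Theorem~\ref{thm:main} by applying conditions~\ref{itm:coupon} and~\ref{itm:transversals} in complementary regimes. Since Theorem~\ref{thm:main} permits exchanging the roles of $A$ and $B$, I may assume without loss of generality that $D_A \leq D_B$, and I split the analysis at the threshold $D_A = D_B^{1/(1+\eps/2)}$, chosen to leave a slack of order $\eps$ in each application.

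In the near-balanced regime $D_A \geq D_B^{1/(1+\eps/2)}$, I apply condition~\ref{itm:coupon}. The standard estimates
\[
\bigl(1 - (1-1/k_B)^{D_A}\bigr)^{k_A} \leq \exp\bigl(-k_A (1-1/k_B)^{D_A}\bigr) \leq \exp\bigl(-k_A e^{-D_A/(k_B-1)}\bigr)
\]
reduce the task to verifying $k_A e^{-D_A/(k_B-1)} \gg \log(k_A D_A k_B D_B) = O(\log D_B)$. From $D_A \leq D_B$, I obtain $D_A/(k_B-1) \leq (1+o(1))\log D_B/(1+\eps)$, and hence $e^{-D_A/(k_B-1)} \geq D_B^{-1/(1+\eps)-o(1)}$. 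Combined with $k_A \geq (1+\eps) D_A/\log D_A \geq D_B^{1/(1+\eps/2)-o(1)}$, this yields $k_A e^{-D_A/(k_B-1)} \geq D_B^{1/(1+\eps/2) - 1/(1+\eps) - o(1)}$, whose exponent $(\eps/2)/((1+\eps/2)(1+\eps)) - o(1)$ is a positive constant for $D_0$ large, beating $O(\log D_B)$ handily.

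In the asymmetric regime $D_A < D_B^{1/(1+\eps/2)}$, I apply condition~\ref{itm:transversals}. Rearranged, this condition reads $(k_A - 1)\log(k_B/D_A) \geq \log(e k_A D_B)$. The left factor satisfies
\[
\log(k_B/D_A) \geq \log(D_B/D_A) - \log\log D_B - O(1) \geq \bigl((\eps/2)/(1+\eps/2)\bigr)\log D_B - \log\log D_B = \Omega(\eps \log D_B),
\]
while the right side is at most $2\log D_B + O(1)$. Thus the condition reduces to $k_A \geq 1 + O(1/\eps)$, which follows from $k_A \geq (1+\eps)D_0/\log D_0$ once $D_0 = D_0(\eps)$ is sufficiently large.

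The main technical obstacle lies in choosing the split exponent so that both halves have room to spare: picking $\alpha_0 = 1/(1+\eps/2)$, symmetric between the critical value $1/(1+\eps)$ of Case~1 and the trivial value $1$, guarantees an $\Omega(\eps)$ positive slack in the relevant exponent of $D_B$ in both applications, which is what allows the $o(1)$ and $\log\log D_B$ corrections to be absorbed simultaneously.
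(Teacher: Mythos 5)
Your proposal is correct and follows essentially the same route as the paper's proof: reduce to Theorem~\ref{thm:main} after a WLOG ordering of $D_A,D_B$, apply condition~\ref{itm:transversals} in the highly asymmetric regime and condition~\ref{itm:coupon} in the near-balanced regime, using the standard $1-x\le e^{-x}$ estimates and crude polynomial bounds on the dependency degree. The only differences are cosmetic: you split at $D_A=D_B^{1/(1+\eps/2)}$ and track exponents of $D_B$, while the paper splits at $D_B=D_A^2$ and uses the monotonicity of $\log x/x$ to phrase the verification in terms of $D_A$.
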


\begin{proof}
Without loss of generality, we can assume $D_B \ge D_A.$

	If $D_B \ge  D_A^2$, say, then the result follows from condition~\ref{itm:transversals} in Theorem~\ref{thm:main}. For then, we have that \begin{align*}k_B \ge (1+\eps)D_B/\log D_B &= ((1+\eps)D_B^{1/4})(D_B^{1/4}/\log D_B)\sqrt{D_B}\\ &\ge ((1+\eps)\sqrt{D_A})(D_B^{1/4}/\log D_B)D_A.\end{align*} Now note that $$(1+\eps)\sqrt{D_A} \ge (e k_A)^{1/(k_A-1)}$$ since $$k_A \ge (1+\eps)D_A/\log D_A\mbox{, and }D_B^{1/4}/\log D_B \ge D_B^{1/(k_A-1)},$$ both provided $D_0$ is sufficiently large. So we can conclude with Theorem~\ref{thm:main} as we have verified condition~\ref{itm:transversals}.

	If $D_B \le  D_A^2$, it is a consequence of condition~\ref{itm:coupon} in Theorem~\ref{thm:main}.
	Let $\delta=\eps /2>0$. We have $1-1/k_B \ge \exp( -(1+\delta)/k_B )$ for $k_B$ sufficiently large.
	So then
	\begin{align*}
	1-(1-1/k_B)^{D_A} &\le 1- \exp \left( -(1+\delta)D_A/k_B  \right)\\
	&\le \exp \left( - \exp \left( -(1+\delta)D_A/k_B  \right)\right).
	\end{align*}
	Hence, letting $k_B \ge (1+\eps)D_B/\log D_B$, we can verify using $k_A \le D_A$ and $k_B \le D_B$ that 
	\begin{align*}
	&e(k_AD_A(k_BD_B-1)+1) \left(1-(1-1/k_B)^{D_A}\right)^{k_A} \\
	&\le e D_A^6 \exp \left( - \exp \left( -(1+\delta) D_A/k_B \right) k_A\right)\\
	&\le e D_A^6 \exp \left( - \exp \left( -((1+\delta)/(1+\eps)) \log{D_A} \right) k_A\right)\\
	&\le e D_A^6 \exp \left( - (1+\eps)   D_A^{ \delta/(1+\eps)} / \log{D_A} \right)\\
	&<1. \qedhere
	\end{align*}
\end{proof}

We now proceed to the proof of Theorem~\ref{thm:main}. Note that these arguments are nearly identical to those given in~\cite{ACK20+}, but we include them in the notation of this stronger setup and for completeness.

We apply a simple result about hypergraph transversals, which needs a little more notation. Let ${\mathscr H}=(V,E)$ be a hypergraph. The {\em degree} of a vertex in ${\mathscr H}$ is the number of edges containing it. 
Given some partition of $V$, a {\em transversal} of ${\mathscr H}$ is a subset of $V$ that intersects each part in exactly one vertex. 
A transversal of ${\mathscr H}$ is called {\em independent} if it contains no edge, see~\cite{EGL94}.
The following is in fact a modest strengthening of the main result of~\cite{ErLo75}. See~\cite[Lem.~11]{ACK20+} for its straightforward derivation with  the Lov\'asz Local Lemma.

\begin{lemma}\label{thm:transversal} Fix $k\ge 2$.
Let ${\mathscr H}$ be a $k$-uniform vertex-partitioned hypergraph, each part being of size $\ell$, such that every part has degree sum at most $\Delta$.  If $\ell^k \ge e(k(\Delta-1)+1)$, then ${\mathscr H}$ has an independent transversal.
\end{lemma}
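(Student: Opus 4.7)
The plan is to apply the Lov\'asz Local Lemma directly, using the natural random transversal. I would first observe that any edge of $\mathscr H$ whose $k$ vertices do not lie in $k$ distinct parts can never be fully contained in a transversal, so such edges are irrelevant and may be discarded; from here on every edge hits exactly $k$ distinct parts.

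Now select, independently and uniformly at random, one vertex from each part to form a random transversal $T$. For each (surviving) edge $e$, let $A_e$ be the bad event that $e\subseteq T$. Since $e$ has one vertex in each of $k$ distinct parts of size $\ell$, and the choices in different parts are independent, $\Pr(A_e)=\ell^{-k}$. The event $A_e$ is determined entirely by the random choices made in the $k$ parts that meet $e$, so $A_e$ is mutually independent of $\{A_{e'}:e'\text{ shares no part with }e\}$.

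Next I would bound the dependency degree. Each part has degree sum at most $\Delta$, which upper-bounds the number of edges incident to that part; hence at most $\Delta-1$ edges other than $e$ itself touch any given part of $e$. Summing over the $k$ parts meeting $e$, the number of other events with which $A_e$ may be dependent is at most $k(\Delta-1)$. The Local Lemma applies with $p=\ell^{-k}$ and $d=k(\Delta-1)$, and the condition $ep(d+1)\le1$ becomes exactly
\[
\ell^k \ge e\bigl(k(\Delta-1)+1\bigr),
\]
which is the hypothesis. With positive probability no $A_e$ occurs, and then $T$ is an independent transversal.

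I do not anticipate a genuine obstacle: this is a textbook symmetric LLL computation, and the only subtleties are (a) remembering to restrict to edges meeting $k$ distinct parts so that the probability calculation is clean, and (b) using the degree-sum bound on a part (rather than a per-vertex degree bound) to control the number of dependent events, which is exactly the formulation under which the constant $k(\Delta-1)+1$ matches the stated hypothesis.
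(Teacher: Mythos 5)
Your argument is correct and is exactly the intended one: the paper does not spell out a proof but defers to~\cite{ACK20+}, describing the lemma as a straightforward derivation via the Lov\'asz Local Lemma, and your symmetric LLL computation with $p=\ell^{-k}$ and $d=k(\Delta-1)$ (after discarding edges that meet a part twice, and using the degree-sum bound on each part) is precisely that derivation. No gaps.
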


\noindent
 Lemma~\ref{thm:transversal} implies Theorem~\ref{thm:main} under condition~\ref{itm:transversals}.

\begin{proof}[Proof of Theorem~\ref{thm:main} under condition~\ref{itm:transversals}]
Let $k_A,k_B,D_A,D_B$ satisfy condition~\ref{itm:transversals}. 
For every $w \in B_G$ with $\lvert L(w) \rvert > k_B$, take a subset of exactly $k_B$ vertices of $L(w)$ and remove the other vertices and incident edges.
We do similarly for every $v\in A_G$ with $\lvert L(v) \rvert > k_A$.
We define a suitable hypergraph ${\mathscr H}$ 
with $V({\mathscr H}) = V(H)$. 

Let $(w_1,\ldots,w_{k_A})$ be an edge of $E({\mathscr H})$ if the $w_i$ are elements from different $L(w)$, for $w \in B_G$, and there is some $v\in A_G$ such that there is a perfect matching between $\{w_1,\ldots,w_{k_A}\}$ and $L(v)$.

Note that ${\mathscr H}$ is a $k_A$-uniform vertex-partitioned hypergraph, where the parts are naturally induced by each $L(w)$, for $w \in B_G$, and so are each of size $k_B$. We have defined ${\mathscr H}$ and its partition so that any independent transversal of ${\mathscr H}$ corresponds to a partial independent transversal of $H$ with respect to $L$ that can be extended to an independent transversal of $H$.

Every vertex in ${\mathscr H}$ has degree at most
\(
D_B D_A^{k_A-1},
\)
and so the result follows from Lemma~\ref{thm:transversal} with $\ell=k_B$ and $\Delta=k_B D_B D_A^{k_A-1}$.
\end{proof}

\begin{proof}[Proof of Theorem~\ref{thm:main} under condition~\ref{itm:coupon} or~\ref{itm:couponDP}]
	Let $k_A,k_B,D_A,D_B$ satisfy condition~\ref{itm:coupon} or $k_A,k_B,\Delta_A,\Delta_B$ satisfy condition~\ref{itm:couponDP}.
	By focusing on a possible subgraph of $H$, we can assume $\lvert L(v) \rvert = k_A$ for every $v \in A_G$ and $\lvert L(w) \rvert = k_B$ for every $w \in B_G$.
	We pick randomly and independently one vertex in $L(w)$ for every $w\in B_G$, resulting in a set $\mathbf{B'}$ of $\lvert B_G \rvert$ vertices.
	Let $T_{v,c}$ be the event that for some $v \in A_G$, the vertex $c \in L(v)$ has a neighbour in $\mathbf{B'}$.
	Let $T_v$ be the event that $T_{v,c}$ happens for all $c\in L(v)$.
	
	\begin{claim}\label{claim:collector}
		The events $T_{v,c}$, for fixed $v$ as $c$ ranges over all vertices in $L(v)$, are negatively correlated. 
		In particular,
		\(\Pr(T_v) \le \prod_{c\in L(v)} \Pr(T_{v,c})\).
	\end{claim}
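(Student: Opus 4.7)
My plan is to recast the random process as a balls-into-bins scheme. For each $w \in N(v) \cap B_G$ the correspondence-cover property ensures that the subgraph of $H$ between $L(v)$ and $L(w)$ is a matching, so at most one vertex $c \in L(v)$ is adjacent to the uniformly chosen $X_w \in L(w)$. Letting $Y_w$ denote that unique $c$ when it exists and a dummy symbol $\ast$ otherwise, the variables $\{Y_w\}_{w \sim v}$ are mutually independent, and $T_{v,c} = \{Y_w = c \text{ for some } w\}$ is exactly the event that bin $c$ is nonempty.

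Next, I would prove by induction on $n := |N(v) \cap B_G|$ the stronger statement that $\Pr(\bigcap_{c \in S} T_{v,c}) \le \prod_{c \in S} \Pr(T_{v,c})$ for every $S \subseteq L(v)$. The base case $n = 0$ is immediate. For the step, let $T'_{v,c}$ denote the analogous event using only $w_1, \ldots, w_{n-1}$, and set $r_c := \Pr(T'_{v,c})$, $q_c := \Pr(Y_{w_n} = c)$. Since $T_{v,c} = T'_{v,c} \cup \{Y_{w_n} = c\}$, the events $\{Y_{w_n} = c\}$ for distinct $c$ are disjoint, and $Y_{w_n}$ is independent of $(T'_{v,c})_c$, conditioning on $Y_{w_n}$ and applying the inductive hypothesis to $S$ (when $Y_{w_n} \notin S$) and to $S \setminus \{c^\ast\}$ (when $Y_{w_n} = c^\ast \in S$) yields
\[
\Pr\Bigl(\bigcap_{c \in S} T_{v,c}\Bigr) \le \Bigl(1 - \sum_{c \in S} q_c\Bigr) \prod_{c \in S} r_c + \sum_{c^\ast \in S} q_{c^\ast} \prod_{c \in S \setminus \{c^\ast\}} r_c.
\]

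The remainder is a routine polynomial comparison. Independence of $T'_{v,c}$ from $Y_{w_n}$ gives $\Pr(T_{v,c}) = r_c + q_c(1 - r_c)$, and expanding
\[
\prod_{c \in S} \bigl(r_c + q_c(1-r_c)\bigr) = \sum_{T \subseteq S} \prod_{c \in T} q_c(1 - r_c) \prod_{c \in S \setminus T} r_c,
\]
one checks that the $|T| \le 1$ summands on the right collapse to precisely the upper bound just derived, so the inequality follows from nonnegativity of the remaining $|T| \ge 2$ summands. The main obstacle I anticipate is keeping the induction-step algebra tidy; the conceptual heart, however, is transparent: the correspondence-cover matching makes the indicators $\mathbf{1}[Y_w = c]$ mutually exclusive in $c$ for each fixed $w$, which is exactly what forces the events $T_{v,c}$ to be negatively correlated.
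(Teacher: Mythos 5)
Your proof is correct, and it takes a genuinely different route from the paper's. The paper proves the claim by induction on the size of the subset $I\subseteq L(v)$ of events, the key step being the monotonicity assertion $\Pr(\forall c\in I\colon T_{v,c})\le\Pr(\forall c\in I\colon T_{v,c}\mid\lnot T_{v,c'})$, which is justified only by the verbal observation that forbidding all of $I$ is more likely once no neighbour of $c'$ has been selected (this is where the matching structure enters, implicitly). You instead induct on the number of independent random choices: the correspondence condition lets you collapse each choice $X_w$ to a single label $Y_w\in L(v)\cup\{\ast\}$, recasting the problem as the classical negative correlation of occupancy events in a balls-into-bins process, which you then verify by an explicit conditioning-plus-expansion computation; I checked the algebra in your induction step (the $\lvert T\rvert\le 1$ terms of the expansion matching your conditional bound, the $\lvert T\rvert\ge 2$ terms being nonnegative) and it is sound. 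Your argument is fully self-contained and makes the role of the matching hypothesis explicit — it is exactly what makes the indicators $\mathbf{1}[Y_w=c]$ mutually exclusive over $c$ for fixed $w$ — at the cost of somewhat more algebra; the paper's induction over $I$ is shorter but leaves its key conditional inequality at the level of intuition. Both arguments deliver the full negative-correlation statement for every subset of $L(v)$, which is the form needed later in the proof of Theorem~\ref{thm:main}.
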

	
	\begin{claimproof}
		We have to prove, for every $I \subset L(v)$, that
		$\Pr(\forall c \in I \colon T_{v,c}) \le \prod_{c\in I} \Pr(T_{v,c}).$
		We prove the statement by induction on $\lvert I \rvert.$ When $\lvert I \rvert\le 1$ the statement is trivially true.
		Let $I \subset L(v)$ be a subset for which the statement is true and let $c' \in L(v) \setminus I.$ We now prove the statement for $I'=I \cup \{c'\}.$
		We have $\Pr(\forall c \in I \colon T_{v,c}) \le \Pr(\forall c \in I \colon T_{v,c} \mid \lnot T_{v,c'} )$ as the probability to forbid all vertices in $I$ is larger if no neighbour of $c'$ is selected.
		This is equivalent to 
		\begin{align*}
		\Pr(\forall c \in I \colon T_{v,c}) &\ge \Pr(\forall c \in I \colon T_{v,c} \mid T_{v,c'} )\\
		\iff\Pr(\forall c \in I' \colon T_{v,c}) &\le 	\Pr(\forall c \in I \colon T_{v,c}) \Pr(T_{v,c'})
		\end{align*}
		This last expression is at most $ \prod_{c\in I'} \Pr(T_{v,c})$ by the induction hypothesis, as desired.
	\end{claimproof}

	Let us write $L(v) = \{c_1,\dots,c_{k_A}\}$ and for each $i\in[k_A]$
	let the degree of $c_i$ in the neighbouring lists of $v$ be $x_i$.
	Note that $\Pr(T_{v,c_i}) = 1- (1-1/k_B)^{x_i}.$
	
	Under condition~\ref{itm:coupon}, we have $x_i \le D_A$ for every $i.$
	So by the claim we have 
	$$\Pr(T_v) \le \left(1- \left(1-\frac{1}{k_B}\right)^{  D_A} \right)^{k_A}.$$
	Each event $T_v$ is mutually independent of all other events $T_u$ apart from those corresponding to vertices $u\in A_G$ such that some $w_1 \in L(u)$ and $ w_2 \in L(v)$ both have a neighbour in the same part $L(b)$ for some $b \in B_G$. As there are at most $k_AD_A(k_BD_B-1)$ such vertices besides $v$, the Lov\'asz Local Lemma guarantees with positive probability that none of the events $T_v$ occur, i.e.~there is an independent transversal, as desired.

	Now we assume condition~\ref{itm:couponDP}.
	Using  $x_i \le \Delta_A$ for every $1 \le i \le k_A$ and the claim, 
	we have 
	$$\Pr(T_v) \le \left(1- \left(1-\frac{1}{k_B}\right)^{  \Delta_A} \right)^{k_A}.$$
	Noting that $\sum_{i=1}^{k_A} x_i \le k_B \Delta_A$ and that the function $\log (1- (1-1/k_B)^{x} )$ is concave and increasing, Jensen's Inequality together with the claim
	implies that 
	$$\Pr(T_v) \le \left(1- \left(1-\frac{1}{k_B}\right)^{ k_B \Delta_A/k_A} \right)^{k_A}.$$
	Each event $T_v$ is mutually independent of all other events $T_u$ apart from those corresponding to vertices $u\in A$ that have a common neighbour with $v$ in $G$. As there are at most $\Delta_A(\Delta_B-1)$ such vertices besides $v$, the Lov\'asz Local Lemma guarantees with positive probability that none of the events $T_v$ occur, i.e.~there is an independent transversal, as desired.
\end{proof}

\section{Constructions}\label{sec:constructions}

\begin{proof}[Proof of Proposition~\ref{prop:symmetricasymmetric}]
Let $\Delta=2^{2^k}$ for some integer $k>1$ and let $G$ be the complete bipartite graph with $|A_G|=|B_G|=\Delta$.
	Without loss of generality, we may assume that $L$ induces an arbitrary disjoint collection of $\Delta$ sets of size $\Delta/\log_4\Delta$ (for $A_H$) and $\Delta$ sets of size $2$ (for $B_H$). We next describe how to define $H$ with respect to $L$.
We write $A_G=\{v_1,\dots,v_\Delta\}$.
Arbitrarily partition the vertices of $B_G$ into $\Delta/\log_2\Delta$ parts of size $\log_2\Delta$, call them $C_1,\dots,C_{\Delta/\log_2\Delta}$.
Similarly, for each $v_j\in A_G$, arbitrarily partition $L(v_j)$ into $\frac12\Delta/\log_4\Delta=\Delta/\log_2\Delta$ pairs, write them $(p^{v_j}_1,\overline{p}^{v_j}_1),\dots,(p^{v_j}_{\Delta/\log_2\Delta},\overline{p}^{v_j}_{\Delta/\log_2\Delta})$.
Fix $i \in \{1,\dots,\Delta/\log_2\Delta\}$.
Note that the sub-cover of $G$ with respect to $L$ induced by $C_i$ has exactly $2^{\log_2\Delta}=\Delta$ possible independent transversals, since it has $\log_2\Delta$ parts of size $2$. 
Call these transversals $T^i_1,\dots,T^i_\Delta$. For each $j\in\{1,\dots,\Delta\}$, add a union of perfect matchings between each pair of $C_i$ and the pair $(p^{v_j}_i,\overline{p}^{v_j}_i)$ according to the transversal $T^i_j$ of $C_i$ as follows. Connect the chosen vertex of each pair with vertex $p^{v_j}_i$ and the non-chosen vertex of each pair with vertex $\overline{p}^{v_j}_i$. Note this does not violate the maximum degree condition.

Now consider any transversal $T$ of the sub-cover of $G$ with respect to $L$ induced by $B_H$. This corresponds, say, to sub-transversals $T^1_{j_1}$, \dots, $T^{\Delta/\log_2\Delta}_{j_{\Delta/\log_2\Delta}}$ of $C_1$, \dots, $C_{\Delta/\log_2\Delta}$, respectively.
Note for each $i \in \{1,\dots,\Delta/\log_2\Delta\}$ that the construction ensures, among $(p^{v_1}_i,\overline{p}^{v_1}_i),\dots,(p^{v_\Delta}_i,\overline{p}^{v_\Delta}_i)$, that only $(p^{v_{j_i}}_i,\overline{p}^{v_{j_i}}_i)$ and the pair corresponding to the transversal of $C_i$ complementary to $T^i_{j_i}$ have some vertex that has no neighbour in the transversal $T$. It follows that there are at most $2\Delta/\log_2\Delta$ vertices in $A_H$ that have no neighbour in $T$.
However, we need $\Delta$ such vertices in order to be able to extend $T$ to an independent transversal of $H$ with respect to $L$. Noting that $k>1$ implies $\Delta > 2\Delta/\log_2\Delta$, this completes the proof.
\end{proof}

\begin{proof}[Proof of Proposition~\ref{prop:asymmetric}]
We define $G$ and $H$ as follows. 
Let $|A_G|=k^k$ and $|B_G|=k$ (so that $|A_H|=k^{k+1}$ and $|B_H|=k^2$).
From each possible $k$-tuple of vertices taken from $\prod_{w\in B_G}L(w)$, we add an (arbitrary) matching to the $k$ vertices of $L(v)$ for some distinct $v\in A_G$. This satisfies the degree requirements and any transversal of $B_H$ cannot be extended to an independent transversal of $H$ with respect to $L$, as required.
\end{proof}

\begin{proposition}\label{prop:classicDP}
	For any $k \ge 2$, consider a complete bipartite graph $G=(V=A\cup B,E)$ with $|B|=k$.
	If $|A| < k^k/k!$, then for any bipartite correspondence-cover $H$ of $G$ with respect to some $L$ such that $|L(v)| \ge k$ for all $v\in A\cup B$, $H$ admits an independent transversal with respect to $L$.
	If $|A| \ge  \frac{k^{k+1}}{ k! } \log k $, then there exists a bipartite correspondence-cover $H$ of $G$ with respect to some $L$ such that $|L(v)| = k$ for all $v\in A\cup B$ and such that $H$ does not admit an independent transversal with respect to $L$.
\end{proposition}

\begin{proof}
	First assume $|A| < k^k/k!$.
	Let $H$ be any bipartite correspondence-cover of $G$ with $|L(v)|=k$ for every $v \in A\cup B.$ Note that by restricting to some subgraph $H'$ if necessary we can assume this.
	For every $v \in A$, there are at most $k!$ transversals $T$ of $L(B)$ such that every vertex in $L(v)$ has a neighbour in $T$.
	Since there are $k^k$ choices for transversals of $L(B)$ and $|A|k! < k^k$, there exists a transversal of $L(B)$ that can be extended to an independent transversal of $H$ with respect to $L$.

	Now let $|A| > \frac{k^{k+1}}{ k! } \log k $. 
	We will construct a correspondence-cover $H$ of $G$ with respect to some $L$ such that $|L(v)|=k$ for every $v \in A \cup B$ and such that $H$ admits no independent transversal with respect to $L$.
	Without loss of generality, we may assume that $L$ induces an arbitrary collection of $|A|+|B|$ disjoint sets of size $k$.
	To specify $H$ with respect to $L$, let us consider a random bipartite correspondence-cover of $G$ formed by taking a uniformly random perfect matching between $L(v)$ and $L(w)$ for each $v\in A$ and $w\in B$. Now, for each transversal $T$ of $L(B)$, the probability that every vertex $v\in A$ has at least one element $c\in L(v)$ without a neighbour in $T$ is exactly $(1-k!/k^k)^{|A|}$. Thus the expected number of transversals of $L(B)$ that can be extended to an independent transversal is $k^k(1-k!/k^k)^{|A|}$, which is less than $1$ since $|A| > \frac{k^{k+1}}{ k! } \log k $. The existence of the promised $H$ is guaranteed by the probabilistic method.
\end{proof}

The strengthened form of Corollary~\ref{cor:symmetricasymmetric} under only a maximum degree condition on the cover graph $H$ fails. In the following proposition we prove that an optimal choice for $k_A$ in this case will not even be linear in $D$.

\begin{proposition}\label{prop:symmetricasymmetricD}
	For all $D$, there exist bipartite graphs $G$ and $H$ with bipartitions $(A_G,B_G)$, $(A_H,B_H)$, respectively, 
	such that $H$ is a bipartite correspondence-cover of $G$ with respect to some $L: A_G\to 2^{A_H}, B_G\to 2^{B_H}$ and such that the following holds.
	The maximum degree of $H$ is at most $D$, $|L(v)| = \Omega(D^{8/5}) $ for all $v\in A_G$, $|L(w)|=2$ for all $w\in B_G$, and	$H$ does not admit an independent transversal with respect to $L$.
\end{proposition}

\begin{proof}
	Let $q$ be a power of $16$ which is sufficiently large ($q \ge 256$ suffices) and let $D=\left(q^{1/4}+1\right)(q+1)$. Note that with suitable rounding the following argument also works for $q$ a prime power with exponent divisible by $4$. 
	Although this will only prove the statement for certain values of $D$, the reader should be able to routinely check that it holds for all $D$, since the primes are sufficiently dense (Bertrand's postulate is sufficient for this, but one also can simply take the largest value of $k$ for which $q=16^k$ satisfies $\left(q^{1/4}+1\right)(q+1) \le D$).
	
	Let $|A_G|=2a:=2\left(q^{1/4}+1\right)(q+1)q^{1/4}$, and write $A_G=\{v_1,v_2, \ldots, v_{2a}\}.$
 	Let $k=q^2+q+1.$ Note that $k=\Omega(D^{8/5}) $ as $q$ and hence $D$ goes to infinity.
 	For every $1 \le i \le 2a$, let $L(v_i)=\{x_{i,1},\ldots, x_{i,k}\}$.
	
	So far we have only defined $A_G$, $A_H$, and $L$ so that $|L(v)| = k$ for all $v\in A_G$. 
	(So it is trivially a bipartite correspondence-cover at this point, taking $B_G=B_H=E(H)=\emptyset$.)
	We will further define $B_G$, $B_H$, and $L$ in successive stages while maintaining that $|L(w)| = 2$ for all $w\in B_G$. Throughout these stages, we will also specify the edges of $H$ while maintaining that $H$ is a bipartite correspondence-cover of $G$ with respect to $L$ and that $H$ has maximum degree at most $D$.
	At the end we show that $H$ admits no independent transversal with respect to $L$.

 	For every $1 \le j \le k$, let $X_j$ be the set of all vertices $x_{i,j}$, where $1 \le i \le a$ and let $X'_j$ be the set of all vertices $x_{i,j}$, where $a+1 \le i \le 2a$.
	We first prove the following claim, which also holds analogously with $X'_j$ instead of $X_j$.
	\begin{claim}
		For each $1\le j\le k$, one can add edges between the vertices in $X_j$ and some of the vertices in $L(w)$ for some additional vertices $w \in B_G$ with $|L(w)|=2$ so that the degree in $H$ of every $x_{i,j}$, $1\le i\le a$, is increased by at most $(q^{1/4}+1) \log_2{D}$, the degree of every additional vertex in $H$ is at most $D$, $H$ remains a bipartite correspondence-cover of $G$ with respect to $L$, and no independent transversal of $H$ with respect to $L$ may contain two vertices from $X_j$.
	\end{claim}
\begin{claimproof}
	Let $r=q^{1/4}.$ Divide the $a$ vertices of $X_j$ into $r^2+r+1$ parts of nearly equal size (being at most $q+1$). By considering a projective plane of order $r$, i.e.~a $(r^2+r+1,r+1,1)$-design, we can form $r^2+r+1$ different unions of $r+1$ parts each. Such a union contains at most $D$ elements. For each such union, we take $\ell=\log_2 D$ new vertices $w_1^j, \ldots ,w_{\ell}^j$ in $B_G$ with $|L( w_1^j)| = |L(w_2^j)| = \cdots = |L(w_{\ell}^j)|=2$, and match each vertex in the union with a distinct transversal of $L(\{w_1^j,w_2^j,\dots,w_{\ell}^j\})$, joining edges across. 
	Note that all of the additional vertices in $B_H$ have degree at most $D$ and the degree of any $x_{i,j}$, $1\le i\le a$, has increased by  $(r+1)\ell=(q^{1/4}+1)\log_2 D$.
	Moreover, we have not added any edges to $H$ that would violate the bipartite correspondence-cover condition.
	By the definition of the design, every two vertices  $x_{i_1,j}$ and  $x_{i_2,j}$, $1\le i_1,i_2\le a$, belong to a common union and hence are joined to two distinct transversals of some $L(\{w_1^j,w_2^j,\dots,w_{\ell}^j\})$, from which the conclusion follows.	
\end{claimproof}

Let us invoke this first claim for every possible $j$, both for the $X_j$'s and the $X'_j$'s.
So now we know that every $X_j$ and  $X'_j$ may have at most one element from an independent transversal of $H$.
We also have that the degree of any vertex in $A_H$ is $(q^{1/4}+1)\log_2 D$.
Next, for specified $1\le j, j'\le k$, we show how to augment the construction in such a way that the vertices in $X_j$ and $X'_{j'}$ gain additional degree of at most $q^{1/4}$ and no independent transversal in $H$ can contain vertices from both $X_j$ and $X'_{j'}$.

\begin{claim}\label{clm:no2blocksused}
Given $1\le j, j'\le k$, one can add edges between the vertices in $X_j \cup X'_{j'}$ and in $L(w)$ for some new vertices $w \in B_G$ with $|L(w)|=2$ so that the vertices in $X_j \cup X'_{j'}$ gain additional degree in $H$ of at most $q^{1/4}$, the degree of every additional vertex in $H$ is at most $D$, $H$ remains a bipartite correspondence-cover of $G$ with respect to $L$, and no independent transversal of $H$ with respect to $L$ may contain both a vertex from $X_j$ and from $X'_{j'}$.
\end{claim}
\begin{claimproof}
Let $r=q^{1/4}.$
Partition the vertices of $X_j$ into $r$ parts $S_1,\dots,S_r$ of size $D$ and similarly $X'_{j'}$ into $r$ parts $S'_1,\dots,S'_r$ of size $D$.
For every pair of parts $S_s$ and $S'_{s'}$, we add a new vertex $w$ in $B_G$ with $|L(v)|=2$, joining all vertices in $S_s$ to one of the vertices in $L(w)$ and joining all vertices in $S'_{s'}$ to the other.
Note that every additional vertex in $B_H$ has degree $D$
and every vertex in $X_j\cup X'_{j'}$ has been joined to $r$ additional vertices in $B_H$.
	Moreover, we have not added any edges to $H$ that would violate the bipartite correspondence-cover condition.
 The conclusion follows from the fact that any pair of a vertex in $X_j$ and a vertex in $X'_{j'}$ are joined to two different vertices in $L(w)$ for some $w \in B_G$. 
\end{claimproof}

Consider a $(k=q^2+q+1,q+1,1)$-design on $[k]$ with blocks $B_1,B_2, \ldots, B_k$.
For every $j \in [k]$, let us apply this second claim between $X_j$ and $X'_{j'}$ for each $j' \in B_j$.
After this, by the definition of the design as well as the degree promises of the claims, the degree of any vertex in $A_H$ is at most $(q+1)q^{1/4}+(q^{1/4}+1)\log_2{D}<(q+1)q^{1/4} +(q+1)=D$ for $q$ sufficiently large. The claims have also allowed us to maintain the other desired properties for $H$.

Suppose now, for a contradiction, that there is an independent transversal $T$ of $H$ with respect to $L$.
Since $T$ must contain exactly $2a$ vertices from $A_H$, it follows from the first claim that $T$ should contain exactly one vertex from each of $a$ distinct $X_j$ and $a$ distinct $X'_{j'}$. As such, let us suggestively write $T=\{x_{j_1},\dots,x_{j_a},x'_{j'_1},\dots,x'_{j'_a}\}$. Since $T$ is an independent transversal, we may assume from our application of the second claim that $j'_1,\dots,j'_a$ and $B_{j_1},\dots,B_{j_a}$ induce a nonincident set of $a$ points and $a$ blocks in the $(q^2+q+1,q+1,1)$-design. However, since $a=\left(q^{1/4}+1\right)(q+1)q^{1/4}>1+(q+1)(\sqrt q -1)$, this contradicts a known extremal result on nonincidents set in such a design (see~\cite[Thm.~3.3]{Stinson11} and~\cite[Thm.~3]{DSV12}). This completes the proof.
\end{proof}

\section{An asymmetric version of Haxell's theorem}\label{sec:asymHax}

\begin{proof}[Proof of Proposition~\ref{prop:asymHax}]
	We construct an auxiliary graph $H'$ on the vertex set $A_H$, partitioned by $L\mid_{A_G}: A_G\to 2^{A_H}$.
	In $H'$, two vertices $u,v \in A_H$ are connected if and only if there exists $w \in B_G$ such that $u$ is adjacent to one of the vertices in $L(w)$ and $v$ is adjacent to the other vertex in $L(w)$.
	Note that the maximum degree of $H'$ is at most $D^2$.
	Thus by Haxell's theorem~\cite[Thm.~2]{Hax01}, $H'$ admits an independent transversal $T_A$ with respect to $L\mid_{A_G}$. Trivially $T_A$ is partial independent transversal of $H$ with respect to $L$: we next show how $T_A$ can be extended to a full independent transversal of $H$ by specifying the choices on $B_G$. Let $w\in B_G$ and write $L(w)=\{x,y\}$. If $x$ has no neighbour in $T_A$, then we may add $x$ to the independent transversal. On the other hand, if $x$ has a neighbour in $T_A$, then by the definition of $H'$ and $T_A$, it must be that $y$ has no neighbour in $T_A$, in which case we may add $y$ to the independent transversal. (Note here we have used the condition that no vertex in $A_H$ is adjacent to both vertices in $L(w)$ for some $w\in B_G$.) By doing this for all $w\in B_G$, this completes the independent transversal of $H$ with respect to $L$ and thus the proof.

For the sharpness construction, we let $A_G=\{v,v'\}$ and let $L(v)=\{x_1,\dots,x_{D^2}\}$ and $L(v')=\{x'_1,\dots,x'_{D^2}\}$. We also define $B_G=\{w_{i,j} \colon 1\le i,j\le D\}$ and let $L(w_{i,j}) = \{x_{i,j},x'_{i,j}\}$ for each $1\le i,j \le D$. To define $H$, we add edges between $x_{i,j}$ and each of $x_{(i-1)D+1},x_{(i-1)D+2},\dots,x_{iD}$ and between $x'_{i,j}$ and each of $x'_{(j-1)D+1},x'_{(j-1)D+2},\dots,x'_{jD}$ for each $1\le i,j \le D$. Note that $H$ is a $D$-regular graph, $L$ satisfies the desired part size requirements, and no vertex in $A_H$ is adjacent to both vertices in $L(w)$ for some $w\in B_G$. Suppose to the contrary that $H$ admits an independent transversal $T$ with respect to $L$. By symmetry, we may assume without loss of generality that $x_1$ belongs to $T$. By the definition of $H$, this forces that $x'_{1,j}$ for each $1\le j\le D$ must also belong to $T$, which then contradicts there being a choice from $L(v')$ for $T$.
\end{proof}

 It is worth remarking that the condition in Proposition~\ref{prop:asymHax} that no vertex in $A_H$ is adjacent to both vertices in $L(w)$ for some $w\in B_G$ is necessary. For consider the following easy star construction. Let $v\in A_G$ such that $|L(v)|= k_A$ for any $k_A>0$. In $H$ join every $x\in L(v)$ with both vertices of some $L(w)$, $w \in B$, with $|L(w)|=2$. Then $H$ has maximum degree $2$ and clearly there can be no independent transversal of $H$ with respect to $L$.

\subsection*{Acknowledgement}

We are grateful to Noga Alon for stimulating discussions. We thank the anonymous referees for their careful reading that led to improvements in the presentation of this work. We especially appreciate one of the referees, for a comment that triggered our subsequent investigations around Propositions~\ref{prop:asymHax} and~\ref{prop:symmetricasymmetricD}.

\bibliographystyle{abbrv}
\bibliography{bitrans}

\end{document}